\newtheorem{theorem}{Theorem}[section]
\newtheorem{lemma}[theorem]{Lemma}
\theoremstyle{remark}
\newtheorem{remark}[theorem]{Remark}
\renewenvironment{proof}[1][Proof]{ {\itshape \noindent {#1.}} }{$\Box$
\medskip}
\numberwithin{equation}{section}
\newcommand{\R}{\mathbb{R}}
\newcommand{\Z}{\mathbb{Z}}
\newcommand{\Pb}{\mathbb{P}}
\newcommand{\E}{\mathbb{E}}
\newcommand{\B}{\mathbf{B}}
\newcommand{\eps}{\varepsilon}
\def\les{\lesssim}
\newcommand{\la}{\langle}
\newcommand{\ra}{\rangle}
\newcommand{\cX}{\mathcal{X}}
\newcommand{\s}{\mathbf{s}}
\newcommand{\bu}{\mathbf{u}}
\newcommand{\x}{\mathbf{x}}
\newcommand{\z}{\mathbf{z}}
\title[Moments of PAM2]{Moments of 2D Parabolic Anderson Model}
\author{Yu Gu, Weijun Xu}
\address[Yu Gu]{Department of Mathematics, Carnegie Mellon University, Pittsburgh, PA, 15213, USA}
\address[Weijun Xu]{Mathematics Institute, University of Warwick, Coventry, CV4 7AL, UK}
\begin{document}
\begin{abstract}
In this note, we use the Feynman-Kac formula to derive a moment representation for the 2D parabolic Anderson model in small time, which is related to the intersection local time of planar Brownian motions.

\bigskip

%\noindent \textsc{MSC 2010:} 60G60, 35R60, 35B27.

%\medskip

\noindent \textsc{Keywords:} Feynman-Kac formula, renormalization, intersection local time.

\end{abstract}
\maketitle

\section{Introduction}

The aim of this note is to study the existence of moments of the solution to the parabolic Anderson model (PAM) in two spatial dimensions, formally given by
\begin{equation}\label{e.pam2}
\partial_{t} u = \frac{1}{2} \Delta u + u \cdot \xi, \qquad (t,x) \in \R^{+} \times \R^{2},
\end{equation}
where $\xi$ is the two dimensional spatial white noise, that is, a generalized Gaussian process with covariance $\E [\xi(x) \xi(y)] = \delta (x-y)$. 

The equation is well-posed in dimension $1$, but the product between $u$ and $\xi$ becomes ill-defined as soon as $d \geq 2$. For $d=2$, the solution $u$ is defined in \cite{gubinelli2015paracontrolled,hairer2014theory,hairer2015simple} as the limit of a sequence of the regularized and renormalized equations. More precisely, fix a symmetric mollifier $\rho: \R^{2} \rightarrow \R^{+}$ with $\rho(x) = \rho(-x)$ and $\int \rho = 1$. Let
\begin{align*}
\rho_\eps(x) = \eps^{-2} \rho(x/\eps), \qquad \xi_{\eps} = \xi \star \rho_\eps, 
\end{align*}
and consider the equation
\begin{equation}\label{e.maineq}
\partial_t u_\eps=\frac12\Delta u_\eps+ (\xi_{\eps} - C_{\eps}) u_{\eps}, 
\end{equation}
for some large constant $C_{\eps}$. Then, for
\begin{equation}\label{e.larco}
C_{\eps} = \frac{1}{\pi} \log \eps^{-1} ,%+ O(1), 
\end{equation}
the sequence of solutions $\{u_\eps\}$ converges in some weighted H\"older space in probability to a limit $u$ that is independent of the mollification, see e.g. \cite[Theorem 4.1]{hairer2015simple}, and we call this limit $u$ the solution to $2$D PAM. In $d=3$, the mollifier $\rho_\eps(x)=\eps^{-3}\rho(x/\eps)$, and the renormalization constant takes the form $C_{\eps} = c_1\eps^{-1} + c_2 \log \eps^{-1} + O(1)$ \cite{hairer2015multiplicative}. %For $d \geq 4$, one does not expect to make sense of \eqref{e.pam2}. 

So far, most of the results mentioned above focused on the existence of the solution and the convergence of the regularized PDE after renormalization. The statistical properties of $u$ remains a challenge; see \cite{allez2015continuous,cannizzaro2015multidimensional,chouk2016invariance} for some relevant discussions. The goal of this note is to show that the $n$-th moment of the solution $u$ to $2$D PAM exists for small time, and we present a Feynman-Kac formula for $\E[u^n]$. The following is our main result.

\begin{theorem}\label{t.mainth}
	There exists a universal constant $\delta>0$ such that for every $n\in\mathbb{N}$, the $n$-th moment of $u$ exists for $t \in (0, \frac{\delta}{n^{2}})$ with $\E[u(t,x)^n]$ given by \eqref{e.mm}. 
\end{theorem}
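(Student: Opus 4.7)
The plan is to derive the moment formula via the Feynman--Kac representation for the regularized equation \eqref{e.maineq}, and to pass to the limit $\eps \to 0$ using exponential moment bounds for the (self-)intersection local times of planar Brownian motion. The smallness $t < \delta/n^{2}$ will emerge naturally from Brownian scaling applied to these exponential moments.

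First, I would apply the Feynman--Kac formula to \eqref{e.maineq} to write
$$u_\eps(t,x) = \E_{B}\Big[ u_0(x+B_t)\,\exp\Big(\int_0^t \xi_\eps(x+B_s)\,ds - C_\eps t\Big)\Big],$$
where $B$ is a standard planar Brownian motion under $\E_{B}$, independent of $\xi$. Taking the $n$-th power, running $n$ independent copies $B^1,\dots,B^n$, and integrating out the Gaussian field first (legitimate since $\sum_i \int_0^t \xi_\eps(x+B^i_s)\,ds$ is Gaussian conditionally on the $B^i$) produces the pre-limit identity
$$\E[u_\eps(t,x)^n] = \E_{B}\Big[\prod_i u_0(x+B^i_t)\,\exp\Big(\frac{1}{2}\sum_{i,j=1}^{n} \int_0^t\!\!\int_0^t R_\eps(B^i_s-B^j_r)\,ds\,dr - nC_\eps t\Big)\Big],$$
with $R_\eps = \rho_\eps \star \rho_\eps$. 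Splitting this double sum into diagonal ($i=j$) and off-diagonal ($i\neq j$) parts, the diagonal combined with $-nC_\eps t$ is, by the choice \eqref{e.larco}, exactly the regularized Varadhan renormalization; as $\eps \to 0$ it converges pointwise to a finite renormalized self-intersection local time for each $B^i$. The off-diagonal part converges to twice the mutual intersection local time $L_t(B^i,B^j)$, which is a genuine random variable in two dimensions. Sending $\eps \to 0$ inside $\E_{B}$ (to be justified) will give the formula \eqref{e.mm}.

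The technical core is the justification of the interchange of limits and the resulting finiteness. Since $u_\eps \to u$ in probability by the well-posedness results recalled above, it is enough to establish uniform-in-$\eps$ integrability of $u_\eps(t,x)^n$, which by the explicit formula reduces to an estimate of the form
$$\sup_{\eps>0} \E_{B} \exp\Big(\frac{1}{2} \sum_{i\neq j} \int_0^t\!\!\int_0^t R_\eps(B^i_s-B^j_r)\,ds\,dr + \text{(renormalized self-intersection terms)}\Big) < \infty.$$
Combined with the pointwise convergence of the integrand, this will yield both $\E[u_\eps^n] \to \E[u^n]$ and the representation \eqref{e.mm}.

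The main obstacle, and the source of the $t < \delta/n^2$ restriction, is the exponential moment of the intersection local time. By Le Gall's theorem there is a finite critical value $\lambda_\ast \in (0,\infty)$ for which $\E \exp(\lambda L_1(B^1,B^2)) < \infty$ if and only if $\lambda < \lambda_\ast$, and Brownian scaling gives the distributional identity $L_t \stackrel{d}{=} t\, L_1$, so exponential integrability of $L_t$ forces $t$ to be small. For $n$ particles there are $\binom{n}{2}$ pairs; applying H\"older's inequality to distribute the joint exponential across these pairs reduces the problem to a single pair evaluated at the amplified parameter $\binom{n}{2}$, so the threshold becomes $\binom{n}{2}\, t < \lambda_\ast$, which is exactly $t < \delta/n^2$ for a universal $\delta > 0$. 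A parallel but slightly more delicate argument, relying on the exponential integrability of Varadhan's renormalized self-intersection local time, controls the diagonal contributions and may only shrink $\delta$ without altering the $n^{-2}$ scaling. Uniformity in $\eps$ for the pre-limit quantities will follow from the same Hölder/scaling scheme applied to the mollified versions, using that $R_\eps \geq 0$ is a probability density and that its convolution against the Brownian transition kernel dominates correctly; this is the step where some care (second-moment estimates of the mollified local times and a Fatou/Skorokhod argument to pass to the limit inside the expectation) will be needed.
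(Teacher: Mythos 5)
Your proposal follows essentially the same route as the paper: Feynman--Kac for the mollified equation, Gaussian integration to produce the double-time integrals of $R_\eps$, separation into the diagonal (self-intersection) pieces canceled against $nC_\eps t$ and the off-diagonal (mutual-intersection) pieces, uniform integrability of the pre-limit quantities via exponential moments, and H\"older's inequality together with Brownian scaling to reduce to a single pair/triangle at an amplified parameter of order $n^2$, whence $t\lesssim n^{-2}$. This matches the paper's structure step by step, including the observation that both uniform-integrability families (for $u_\eps^n$ and for the Brownian integrand) reduce to bounding $\E_\B[e^{\theta(I_n^\eps(t)-n\nu_\eps(t))}]$ uniformly in $\eps$.

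The one place where your description would not go through as written is the closing remark that uniformity in $\eps$ will come from ``second-moment estimates of the mollified local times and a Fatou/Skorokhod argument.'' Fatou's lemma gives lower semicontinuity of expectations, not the uniform-in-$\eps$ \emph{upper} bound on $\E_\B[e^{\lambda Y_\eps}]$ and $\E_\B[e^{\lambda X_\eps}]$ that one actually needs before any limit can be taken; and Skorokhod representation does nothing to control moments. What the paper does instead (Lemma~\ref{l.uniform}) is prove the uniform bound directly: for the mutual-intersection part it uses Le Gall's moment formula and the fact that $\rho_\eps$ is a probability density to show $\E_\B[Y_\eps^n]\le\E_\B[\alpha([0,1]^2)^n]$ term by term, so the $\eps=0$ exponential moment of Le Gall dominates all $\eps>0$; for the renormalized self-intersection part it uses Le Gall's dyadic triangle decomposition of $[0,1]^2_<$, the scaling identity $\beta_\eps(A^k_l)\stackrel{\text{law}}{=}2^{-(k+1)}\alpha_{\eps 2^{(k+1)/2}}([0,1]^2)$, and an iterated H\"older argument across scales. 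That iterative step is the genuine technical core and is not captured by the argument you sketched.
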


\subsection{Heuristic argument}
We first give a heuristic derivation of $\E [u(t,x)]^{n}$ by writing down a representation for $\E[u_{\eps}(t,x)]^{n}$ and passing to the limit formally. 

Suppose $u_\eps(0,x)=u_0(x)$ for some continuous function $u_0$ with $\|u_0\|_{\infty} \leq 1$,  we write the solution to \eqref{e.maineq} by the Feynman-Kac formula
\begin{equation} \label{e.fk}
u_\eps(t,x)=\E_\B\left[u_0(x+B_t)\exp\left( \int_0^t \xi_{\eps} (x + B_s)ds-C_\eps t\right)\right].
\end{equation}
Here, $B=(B_t)_{t\geq 0}$ is a standard planar Brownian motion starting from the origin and independent of the white noise $\xi$, and $C_{\eps}$ is the constant defined in \eqref{e.larco}. We use $\E_\B$ to denote the expectation with respect to $B$. We now proceed to calculating the $n$-th moment of $u_{\eps}(t,x)$. First of all, the covariance function of $\xi_{\eps}$ satisfies
\begin{align*}
\E [\xi_{\eps}(x) \xi_{\eps}(y)] = R_{\eps}(x-y) := \eps^{-2} R \Big( \frac{x-y}{\eps} \Big), 
\end{align*}
where $R = \rho \star \rho$, and $\rho$ is the mollifier used to regularize the noise $\xi$. Next, one raises the expression \eqref{e.fk} to the $n$-th power, and take a further expectation with respect to $\xi_{\eps}$. Since $B$ is independent of $\xi_{\eps}$, one can interchange this expectation with the one with respect to the Brownian motions, and get
\begin{equation}\label{e.fkmm}
\E [u_\eps(t,x)^n]=\E_\B\left[\exp\left(I_n^\eps(t)-nC_\eps t\right)\prod_{k=1}^n u_0(x+B^k_t) \right]. 
\end{equation}
Here, $B^k, k=1,\ldots,n$ are independent Brownian motions, and $\E_{\B}$ denotes the expectation with respect to these $B^{k}$'s. Also, $I_n^\eps(t)$ is given by
\begin{equation} \label{e.intersection}
I_n^\eps(t) = \sum_{k=1}^n \int_0^t\int_0^s R_\eps(B_s^k-B_u^k)duds+\sum_{1\leq i<j\leq n}\int_0^t\int_0^t R_\eps(B_s^i-B_u^j)dsdu,
\end{equation}
where $R_\eps(x)= \eps^{-2} R(x/\eps)$ converges to the Dirac function as $\eps \rightarrow 0$. Note that we do not have the factor $\frac{1}{2}$ in front of the first term since the integration is on the simplex rather than the square $[0,t]^{2}$. It is well known (see for example \cite[Chapter~2]{chen2010random}) that each term in the second term above (when $i \neq j$) converges to the mutual intersection local time of Brownian motion, formally written as $\int_{[0,t]^{2}} \delta(B_{s}^{i} - B_{u}^{j}) ds du$. The first term above (when one has the same Brownian motion in the argument of $R_\eps$) unfortunately does not converge as $\eps \rightarrow 0$, but it does when one subtracts its mean (see \cite{le1992some,varadhan1969appendix,yor1986precisions}). Thus, we define
\begin{equation} \label{e.self_mean}
\nu_\eps(t)=\int_0^t\int_0^s \E_\B[R_\eps(B_s-B_u)]duds, 
\end{equation}
and for every $t \geq 0$, we have
\begin{equation}\label{e.con}
I_n^\eps(t)-n\nu_\eps(t)\to \cX_n(t)
\end{equation}
in probability, where $\cX_n(t)$ is a linear combination of self- and mutual-intersection local times of planar Brownian motions, formally written as
\begin{equation}\label{e.expo}
\begin{aligned}
\cX_n(t)=&\sum_{k=1}^n \int_0^t\int_0^s \Big(\delta(B_s^k-B_u^k)-\E_\B[\delta(B_s^k-B_u^k)] \Big)duds\\
&+\sum_{1\leq i<j\leq n} \int_0^t\int_0^t \delta(B_s^i-B_u^j)dsdu.
\end{aligned}
\end{equation}

It is well known from \cite{le1992some} that $\cX_n(t)$ has exponential moments for small enough $t$ (depending on $n$). In order for the expression \eqref{e.fkmm} to converge, one needs the divergent constant $C_\eps t$ coincides with $\nu_\eps(t)$. A simple calculations shows that this is indeed the case up to an $O(1)$ correction.

\begin{lemma}\label{l.vareq}
There exists constants $\mu_1$ and $\mu_2$ such that 
\[
\nu_\eps(t)-C_\eps t\to t(\mu_1+\mu_2 \log t)
\]
 as $\eps\to 0$.
\end{lemma}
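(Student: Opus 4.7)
\emph{Proof plan.} The strategy is a direct asymptotic calculation, starting from the explicit Gaussian law of the increment. Since $B_s - B_u$ is a centered 2D Gaussian with covariance $(s-u)I_2$, one has $\E_\B[R_\eps(B_s-B_u)] = \int_{\R^2} R_\eps(x) p_{s-u}(x)\, dx$, where $p_r(x) := (2\pi r)^{-1} e^{-|x|^2/(2r)}$ is the standard heat kernel. First, I would collapse the simplex integration via the substitution $r = s-u$, giving
$$\nu_\eps(t) = \int_0^t (t-r) \int_{\R^2} R_\eps(x) p_r(x)\, dx\, dr.$$

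Second, I would rescale $r = \eps^2 \tau$ and $x = \eps y$ to extract the entire $\eps$-dependence through the $\eps$-independent function
$$h(\tau) := \frac{1}{2\pi\tau} \int_{\R^2} R(y) e^{-|y|^2/(2\tau)}\, dy,$$
which produces the clean decomposition
$$\nu_\eps(t) = t \int_0^{t/\eps^2} h(\tau)\, d\tau \,-\, \eps^2 \int_0^{t/\eps^2} \tau h(\tau)\, d\tau.$$

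Third, the proof reduces to the two-sided asymptotics of $h$. As $\tau \to 0$, the kernel $p_\tau(y)$ is a delta approximation and $R$ is continuous, so $h(\tau) \to R(0)$. As $\tau \to \infty$, Taylor expanding $e^{-|y|^2/(2\tau)} = 1 - |y|^2/(2\tau) + O(\tau^{-2})$ and using the finite second moment of $R = \rho \star \rho$ (automatic since $\rho$ is a mollifier) yields $h(\tau) = \frac{1}{2\pi\tau} + O(\tau^{-2})$. Defining
$$K \,:=\, \int_0^1 h(\tau)\, d\tau \,+\, \int_1^\infty \Big( h(\tau) - \frac{1}{2\pi\tau} \Big) d\tau,$$
we then obtain $\int_0^{t/\eps^2} h(\tau)\, d\tau = \tfrac{1}{2\pi} \log(t/\eps^2) + K + o(1) = \tfrac{\log t}{2\pi} + \tfrac{\log \eps^{-1}}{\pi} + K + o(1)$, while $\eps^2 \int_0^{t/\eps^2} \tau h(\tau) d\tau \to \tfrac{t}{2\pi}$ from the fact that $\tau h(\tau) \to \tfrac{1}{2\pi}$ at infinity (the correction $\tau h(\tau) - \tfrac{1}{2\pi} = O(\tau^{-1})$ integrates to $O(\log \eps^{-1})$, which is killed by the $\eps^2$ prefactor).

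Combining the two pieces and using $C_\eps = \tfrac{1}{\pi}\log \eps^{-1}$, the divergent $\tfrac{t}{\pi}\log\eps^{-1}$ from the first piece cancels $C_\eps t$ exactly, and what survives is
$$\nu_\eps(t) - C_\eps t \,\to\, t\Big(K - \tfrac{1}{2\pi}\Big) + \tfrac{t \log t}{2\pi},$$
identifying $\mu_1 = K - \tfrac{1}{2\pi}$ and $\mu_2 = \tfrac{1}{2\pi}$. No step presents a genuine obstacle; the only care needed is in controlling the $o(1)$ errors uniformly as $T = t/\eps^2 \to \infty$, which follows from the $O(\tau^{-2})$ tail of $h(\tau) - \tfrac{1}{2\pi\tau}$. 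It is comforting that $\mu_2 = \tfrac{1}{2\pi}$ is consistent with the Wick-ordering normalization $C_\eps = \tfrac{1}{\pi}\log\eps^{-1}$ used to define the solution.
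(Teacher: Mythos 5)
Your argument is correct and follows essentially the same route as the paper's: both reduce $\nu_\eps(t)$ by the change of variables $r=s-u$ followed by parabolic rescaling, isolate the $\log\eps^{-1}$ divergence from the behaviour of the resulting one-dimensional integral near $\tau=\infty$, and observe that the subleading correction $\eps^2\int_0^{t/\eps^2}\tau h(\tau)\,d\tau\to t/(2\pi)$. The only (welcome) difference is presentational: by packaging everything into $h(\tau)$ and splitting off $\frac{1}{2\pi\tau}$ you make the limiting constants explicit, in particular $\mu_2=\frac{1}{2\pi}$, whereas the paper stops at "a direct calculation shows" after an extra integration by parts.
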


By \eqref{e.con} and Lemma~\ref{l.vareq}, we have 
\[
\begin{aligned}
I_n^\eps(t)-nC_\eps t=&I_n^\eps(t)-n\nu_\eps(t)+n(\nu_\eps(t)-C_\eps t)\\
\to&\cX_n(t)+nt(\mu_1+\mu_2 \log t) 
\end{aligned}
\]
in probability. If the families $\{u_{\eps}(t,x)^{n}\}$ and $\{ e^{I_n^\eps(t)-nC_\eps t}\}$ are both uniformly integrable, then we can pass both sides of  \eqref{e.fkmm} to the limit, and obtain
\begin{equation}\label{e.mm}
\E[u(t,x)^n]=\E_\B\bigg[\exp\big(\cX_n(t)+nt(\mu_1+\mu_2 \log t)\big) \prod_{k=1}^n u_0(x+B^k_t)  \bigg].
\end{equation}
The rest of the note is to show the uniform integrability of $\{u_{\eps}(t,x)^{n}\}$ and $\{ e^{I_n^\eps(t)-nC_\eps t}\}$ for small time $t$, so \eqref{e.mm} does hold. 

\subsection{Discussions}

\begin{remark}
The same argument leads to a similar result in $d=1$, where we choose $C_\eps=0$ and do not have the small time constraint. The renormalized self-intersection local time can be written as 
\[
\begin{aligned}
\int_0^t\int_0^s (\delta(B_s-B_u)-\E_\B[\delta(B_s-B_u)])duds=\frac12\int_\R L_t(x)^2 dx-\frac12\int_\R \E_\B[L_t(x)^2] dx,
\end{aligned}
\]
with $L_t(x)$ denoting the local time of 1D Brownian motion up to $t$.
\end{remark}

\begin{remark}
For $n=1$, the moment formula reads 
\[
\E[u(t,x)]=\E_\B[u_0(x+B_t)e^{\gamma([0,t]^2_<)+t(\mu_1+\mu_2 \log t)}],
\]
with $\gamma([0,t]^2_<)=\int_0^t\int_0^s (\delta(B_s-B_u)-\E_\B[\delta(B_s-B_u)])duds$ representing the self-intersection local time of $B$. It was proved in \cite{le1994exponential} that there exists $t_0>0$ such that 
 \[
 \E_\B[e^{\gamma([0,t]^2_<)}]\left| \begin{array}{ll}
 <\infty & t<t_0,\\
 =\infty & t>t_0.
 \end{array}
 \right.
 \]
 Thus, it is natural to expect that the moments of $u$ do not exist for large $t$, although we do not have a rigorous proof of it. 
\end{remark}

\begin{remark} \label{rm:explosion}
In \cite{allez2015continuous}, the authors defined the $2$D Anderson Hamiltonian $\mathscr{H}=-\Delta+\xi$ on the torus $\mathbb{T}^2=\R^2/\Z^2$ using para-controlled calculus. An interesting application is the exponential tail bounds for the ground state eigenvalue $\Lambda_1$. It was proved in \cite[Proposition 5.4]{allez2015continuous} that there exists $C_1,C_2>0$ such that 
\[
e^{C_1x}\leq \Pb[\Lambda_1\leq x]\leq e^{C_2x}
\]
as $x\to-\infty$. Using the orthonormal eigenvectors of $\mathscr{H}$, denoted by $\{e_n\}$, we write the solution to PAM as \[
u(t,x)=\sum_{n=1}^\infty e^{-\Lambda_n t}\la u_0,e_n\ra e_n(x),
\] 
therefore,
\[
\int_{\mathbb{T}^2} \E[|u(t,x)|^2] dx\leq \E[e^{-2\Lambda_1 t}]\int_{\mathbb{T}^2} |u_0(t,x)|^2 dx.
\]
By the exponential tail bounds on $\Lambda_1$, it is clear the r.h.s. of the above display is only finite for small $t$, which is consistent with our result. 

%On the other hand, if we consider the equation on the $2$D torus with the Dirac initial condition, then with an argument in \cite[Page 33, equation (2.22)]{konig}, one can actually show the moments blow up for large time. In fact, let $u^{\delta_{x}}$ denote the solution to PAM on the $2$D torus with initial data $\delta(x-\cdot)$, then one has
%\begin{equation}\label{e.konig}
%e^{-t \Lambda_1} \leq \sum_{n} e^{-t \Lambda_n} \int_{\mathbb{T}^{2}} \big< e_n, \delta(x-\cdot) \big> e_n(x) dx = \int_{\mathbb{T}^{2}} u^{\delta_x}(t,x) dx. 
%\end{equation}
%By the translation invariance of $\xi$, the distribution of $u^{\delta_{x}}(t,x)$ does not depend on $x$, and hence $\E [u^{\delta_0}(t,0)] = +\infty$ for large enough $t$. 
\end{remark}

\begin{remark}
	In the forthcoming article \cite{dirk}, the authors consider the $2$D PAM with a small noise
	\begin{equation} \label{e.beta}
	\partial_{t} u = \Delta u + \beta u \cdot \xi, \qquad (t,x) \in \R^{+} \times \R^{2}. 
	\end{equation}
	They obtain an explicit chaos expansion of certain polymer measure associated with \eqref{e.beta} for $\beta\ll1$. In particular, this implies that the second moment of $u$ exists for $t \in [0,1], x \in \R^{2}$ and $\beta$ sufficiently small. The restriction of $\beta\ll1$ is equivalent with our small time restriction. Indeed, define
	\begin{align*}
	u_{\beta}(t,x) := u(t/\beta^2, x/\beta), 
	\end{align*}
	one sees that $u_{\beta}$ satisfies \eqref{e.pam2}, hence for $u_\beta(t,x)$ to be square integrable, we need $t/\beta^2\leq 1$, i.e., $t\leq \beta^2\ll1$. 
\end{remark}

\begin{remark}\label{r.de}
A simple calculation shows that the moments of the approximations to $3$D PAM explode as $\eps\to0$, and indicates that the solution to $3$D PAM may not have a moment. To see this, we consider the constant initial condition $u_0 \equiv 1$, so
	\begin{align*}
	\E [u_{\eps}(t,x)] = e^{- C_{\eps} t} \ \E_\B \Big[\exp \Big( {\int_{0}^{t} \int_{0}^{s} R_{\eps}(B_s - B_u) du ds} \Big) \Big],
	\end{align*}
	where $R_\eps(x)=\eps^{-3}R(x/\eps)$.
	
	Since $R(x)$ is continuous and $R(0)>0$, without loss of generality we assume there exists $\delta>0$ such that $R(x)>\delta>0$ for $|x|\leq 2$. Thus, by considering the event that $|B_s| < \eps$ for all  $s\in [0,t]$, we have
	\[
	 \E_\B \Big[\exp \Big( {\int_{0}^{t} \int_{0}^{s} R_{\eps}(B_s - B_u) du ds} \Big) \Big]\geq  \exp\left(\frac{\delta t^2}{2\eps^3}\right)\Pb\big[\sup_{s\in[0,t]}|B_s|<\eps\big].
	 \]
The probability $\Pb[\sup_{s\in[0,t]}|B_s|<\eps]$ is bounded from below by $e^{-c't\eps^{-2}}$ for some $c'>0$ depending on the dimension. When $d=3$, the renormalization constant $C_{\eps}=c_1\eps^{-1}+c_2|\log \eps|+O(1)$. It implies that for any $t>0,x\in\R^3$, we have $\lim_{\eps\to0} \E[u_\eps(t,x)]=\infty$. The same discussion applies to $d=2$, where 
\[
\E[u_\eps(t,x)]\geq  \exp\left(\frac{\delta t^2}{2\eps^2}-\frac{c't}{\eps^2}-C_\eps t\right).
\]
If $t>2c'/\delta$, we also have $\lim_{\eps\to0}\E[u_\eps(t,x)]=\infty$. Since we do not have a proof of $\E[u(t,x)]=\lim_{\eps\to0}\E[u_\eps(t,x)]$ in $d=3$ or $d=2$ for large $t$, we only conjecture that $\E[u(t,x)]=\infty$ in those cases.
\end{remark}

\begin{remark}
When $d=2$, the small time constraint for the existence of moments in our context also appears in \cite[Theorem 4.1]{hu2002chaos}, where the usual product $u\cdot\xi$ is replaced by the Wick product $u\diamond \xi$.
\end{remark}

\begin{remark}
In \cite{gkr}, a similar result is derived for the random Schr\"odinger equation $i\partial_t \phi+\frac12\Delta \phi-\phi\cdot \xi=0$.
\end{remark}

\section{Proof of Lemma \ref{l.vareq} and Theorem \ref{t.mainth}}

We denote $[0,t]^n_<=\{0\leq s_1<\ldots<s_n\leq t\}$, and write $a\les b$ if $a\leq Cb$ with some constant $C$ independent of $\eps$.

\begin{proof}[Proof of Lemma~\ref{l.vareq}]
By scaling property of Brownian motion, we have
\begin{align*}
R_{\eps}(B_s - B_u) = \eps^{-2} R \Big( \frac{B_s - B_u}{\eps} \Big) \stackrel{\text{law}}{=} \eps^{-2} R \big( B_{s/\eps^2} - B_{u/\eps^2} \big). 
\end{align*}
A change of variable $(u/\eps^2,s/\eps^2) \mapsto (u,s)$ then yields
\[
\nu_\eps(t)=\eps^2\int_0^{t/\eps^2}\int_0^{s}\E_\B[R(B_s-B_u)]duds.
\]
Now, $B_s - B_u$ has the normal density $x\mapsto \big( 2\pi (s-u)\big)^{-1} e^{-\frac{|x|^2}{2(s-u)}}$. We then do another change of variable $s-u \mapsto v$, integrate $s$ out, and rescale $v \rightarrow v \eps^{2}$. This leads us to
\begin{align*}
\nu_\eps(t) &= \frac{t}{2\pi} \int_{\R^{2}} R(x)  \bigg( \int_{0}^{t} v^{-1} e^{-\frac{\eps^2 |x|^2}{2v}}dv \bigg) dx - \frac{1}{2\pi}\int_0^t  \bigg(\int_{\R^2} R(x) e^{-\frac{\eps^2|x|^2}{2v}}dx \bigg) dv \\
&:= (\text{i}) - (\text{ii}). 
\end{align*}
Since $R$ integrates to $1$, it is clear that
\[
(\text{ii})\to \frac{t}{2\pi}
\]
as $\eps \rightarrow 0$. As for (i), a substitution of variable $\frac{\eps^2 |x|^2}{2v} \mapsto \lambda$ and then an integration by parts yields
\[
\begin{aligned}
(\text{i}) &= \frac{t}{2\pi}\int_{\R^2}R(x) \left( \int_{\frac{\eps^2|x|^2}{2t}}^\infty \lambda^{-1}e^{-\lambda}d\lambda\right)dx\\
&= \frac{t}{2\pi}\int_{\R^2} R(x)  \left( \int_{\frac{\eps^2|x|^2}{2t}}^\infty e^{-\lambda}\log \lambda d\lambda-e^{-\frac{\eps^2|x|^2}{2t}}\log \Big(\frac{\eps^2|x|^2}{2t}\Big)\right)dx.
\end{aligned}
\]
It is clear from the above expression that as $\eps\to0$, the only divergent part of (i) is from the term $\log (\eps^2)$, and a direct calculation shows
\[
\nu_\eps(t)-\frac{t}{\pi} \cdot |\log \eps| \to \mu_1t+\mu_2 t\log t
\]
for some constant $\mu_1,\mu_2$.
\end{proof}

\begin{proof}[Proof of Theorem~\ref{t.mainth}]
Fix $(t,x)$ and $n$, and recall that 
\begin{equation}\label{e.mmeps}
\E[u_\eps(t,x)^n]=\E_\B\left[\exp(I_n^\eps(t)-nC_\eps t)\prod_{k=1}^n u_0(x+B^k_t)\right], 
\end{equation}
where $\E_{\B}$ is the expectation with respect to independent planar Brownian motions $B^{k}$'s, and $I_n^\eps$ is given by the expression \eqref{e.intersection}. Note that $u_{\eps}(t,x)^n \rightarrow u(t,x)^n$ in probability, and that by \eqref{e.con} and Lemma \ref{l.vareq}, we have
\begin{align*}
 I_n^\eps(t)-nC_\eps t\to\cX_n(t)+nt(\mu_1+\mu_2 \log t)
\end{align*}
in probability. Thus, in view of \eqref{e.mmeps}, it suffices to show the uniform integrability of $u_{\eps}(t,x)^n$ and $\exp(I_n^\eps(t)-nC_\eps t)\prod_{k=1}^n u_0(x+B^k_t)$. This allows us to pass both sides of \eqref{e.mmeps} to the limit and conclude Theorem \ref{t.mainth}. 

To prove the uniform integrability, we bound the second moment of these two objects:
\begin{align*}
\E\big[|u_\eps(t,x)|^{2n}\big] \les \E_\B \big[e^{I_{2n}^\eps(t)-2nC_\eps t}\big] \les \E_\B\big[ e^{I_{2n}^\eps(t)-2n\nu_\eps(t)}\big],
\end{align*}
and 
\begin{align*}
\E_\B\Big[\big|e^{I_n^\eps(t)}e^{-nC_\eps t}\prod_{k=1}^n u_0(x+B^k_t)\big|^2\Big]\les \E_\B\big[e^{2I_n^\eps(t)-2nC_\eps t}\big] \les \E_\B\big[ e^{2(I_n^\eps(t)-n\nu_\eps(t))}\big], 
\end{align*}
where we have used $\|u_0\|_\infty\leq 1$. Thus, it suffices to show that for every $n$ and $\theta$, there exists $t_0$ small enough such that $\E_\B[e^{ \theta (I_n^\eps(t)-n\nu_\eps(t))}]$ is uniformly bounded in $\eps$ for all $t<t_0$. To see this, using H\"older's inequality, we get
\begin{align*}
\E_\B \big[e^{ \theta (I_n^\eps(t)-n\nu_\eps(t))} \big] \leq \prod_{k=1}^{n} \Big[ \E_\B  e^{\theta N [\beta_{\eps}^{k}([0,t]^{2}_{<})-\E_\B\beta_{\eps}^{k}([0,t]^{2}_{<})]} \Big]^{\frac{1}{N}} \prod_{1\leq i<j \leq n}  \Big( \E_{\B} e^{\theta N \alpha_{\eps}^{i,j}([0,t]^{2})} \Big)^{\frac{1}{N}}, 
\end{align*}
where $N=\frac{n(n+1)}{2}$, and we have used the notations
\begin{align*}
\beta_\eps^k([0,t]^2_<)=\int_0^t \int_0^s R_\eps(B^k_s-B^k_u)duds, \  \ \alpha_\eps^{i,j}([0,t]^2)=\int_0^t\int_0^t R_\eps(B^i_s-B^j_u)dsdu.
\end{align*}
 By change of variables and the scaling property of the Brownian motion, we have 
\begin{align*}
\beta_\eps^k([0,t]^2_<) \stackrel{\text{law}}{=}t \beta_{\eps/\sqrt{t}}^k([0,1]_<^2), \qquad  \alpha_\eps^{i,j}([0,t]^2)\stackrel{\text{law}}{=}t\alpha_{\eps/\sqrt{t}}^{i,j}([0,1]^2). 
\end{align*}
Then, Lemma~\ref{l.uniform} implies that there exists $\lambda, C > 0$ such that %as soon as $t < \frac{\lambda}{\theta N}$, one has
\begin{align*}
t < \frac{\lambda}{\theta N} \  \ \Rightarrow \  \ \sup_{\eps\in(0,1)} \E_\B \big[e^{ \theta (I_n^\eps(t)-n\nu_\eps(t))} \big] \leq C. 
\end{align*}
This completes the proof. 
\end{proof}

\appendix
\section{Exponential moments of intersection local time of planar Brownian motions}

Recall that $R_\eps(x)=\eps^{-2}R(\frac{x}{\eps})$, we define 
\[
\alpha_\eps(A)=\int_{A} R_\eps(B_s^1-B_u^2)dsdu, \  \ \beta_\eps(A)=\int_{A} R_\eps(B_s-B_u)dsdu
\] 
for any set $A\subset \R_+^2$, and 
\[
X_\eps=\beta_\eps([0,1]^2_<)-\E_\B[\beta_\eps([0,1]^2_<)], \  \  Y_\eps=\alpha_\eps([0,1]^2).
\]
\begin{lemma}\label{l.uniform}
There exists universal constants $\lambda,C>0$ such that 
\[
\sup_{\eps\in(0,1)} \left( \E_\B[ e^{\lambda X_\eps}]+\E_{\B}[e^{\lambda Y_\eps}]\right) \leq C.
\]
%uniformly in $\eps>0$.
\end{lemma}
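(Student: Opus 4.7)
The plan is to expand both exponentials as power series and to prove uniform-in-$\eps$ moment bounds
\[
\E_\B[Y_\eps^n] \leq C^n n!, \qquad \E_\B[|X_\eps|^n] \leq C^n n!,
\]
for some constant $C>0$ independent of $\eps \in (0,1)$. These bounds imply $\E_\B[e^{\lambda X_\eps}] + \E_\B[e^{\lambda Y_\eps}] \leq 2\sum_{n\geq 0} (\lambda C)^n < \infty$ for any $\lambda < 1/C$, which is precisely the assertion of the lemma.

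For $Y_\eps$, I would expand
\[
\E_\B[Y_\eps^n] = \int_{[0,1]^{2n}} \E_\B\Big[\prod_{k=1}^n R_\eps(B^1_{s_k} - B^2_{u_k})\Big] ds\, du,
\]
decompose $[0,1]^{2n}$ into the $(n!)^2$ simplices given by orderings of the $s_k$'s and the $u_k$'s, and on each simplex use the Fourier representation $R_\eps(x) = (2\pi)^{-2}\int \hat R(\eps \xi) e^{i\xi\cdot x} d\xi$ together with the fact that $\hat R = |\hat \rho|^2 \in [0,1]$. Independence of $B^1$ and $B^2$ and the Markov property reduce the expectation to a product of Gaussian characteristic functions of Brownian increments; after changing frequency variables to partial sums and integrating out, one obtains a product of heat-kernel factors bounded by $\prod_j ((\Delta s_j) + (\Delta u_j))^{-1}$-type expressions (uniformly in $\eps$, since $|\hat R|\leq 1$ eliminates $\eps$ from the bound). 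Integrating the resulting logarithmic singularities over a simplex inductively yields a bound of order $C^n/(n!)$ per ordering, and summing the $(n!)^2$ orderings gives the stated $C^n n!$ bound.

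For $X_\eps$, the individual moments $\E_\B[\beta_\eps([0,1]^2_<)^n]$ diverge like $(\log \eps^{-1})^n$, so a direct bound cannot work. The approach is to expand $X_\eps^n = (\beta_\eps - \E_\B\beta_\eps)^n$, arrange the $2n$ resulting time variables in a single increasing order, and classify the resulting integrals by the pattern indicating which adjacent pairs of variables come from the same $\beta_\eps$-factor. Diagrams containing ``adjacent pairs'' produce the logarithmic divergences, which are cancelled exactly by matching terms coming from powers of $\E_\B\beta_\eps$, while the remaining ``chain-like'' diagrams are bounded uniformly in $\eps$ by the same Gaussian-simplex argument used for $Y_\eps$. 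This is the diagrammatic cancellation underlying Le Gall's \cite{le1994exponential} proof of exponential integrability of the planar renormalized self-intersection local time, and executing it carefully yields $\E_\B[|X_\eps|^n] \leq C^n n!$.

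The principal obstacle is bookkeeping the cancellation in the $X_\eps$ bound: one must show that every logarithmically divergent contribution in the expansion of $\beta_\eps^n$ is matched by an identical term coming from a suitable power of $\E_\B\beta_\eps$, with the residue manifestly convergent and combinatorially of order $C^n n!$. The $Y_\eps$ estimate is genuinely simpler since no divergences appear there. Once both moment bounds are established, the uniform bound on the exponential moments for small $\lambda$ follows by the power-series computation above, and the universal constants $\lambda, C$ are read off from the constant appearing in the moment bounds.
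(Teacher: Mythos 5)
Your treatment of $Y_\eps$ is a reasonable alternative: bounding $\hat R(\eps\xi)=|\hat\rho(\eps\xi)|^2\leq 1$ on the Fourier side is the frequency-domain analogue of the paper's Young-inequality step $\|Q_\eps\star h\|_{L^2}\leq\|Q_\eps\|_{L^1}\|h\|_{L^2}$, and both reduce the $\eps>0$ moment to the $\eps=0$ moment $\E_\B[\alpha([0,1]^2)^n]$, which Le Gall controls. The paper's version is cleaner because it never re-derives the ordering combinatorics, but your route would work for $Y_\eps$.

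For $X_\eps$, however, there is a genuine gap, together with a misattribution. You propose to expand $X_\eps^n=(\beta_\eps-\E_\B\beta_\eps)^n$, classify orderings of the $2n$ time variables by ``adjacent-pair'' diagrams, and argue that the logarithmic divergences cancel against matching powers of $\E_\B\beta_\eps$, calling this the mechanism underlying Le Gall's proof. It is not: Le Gall's argument in \cite{le1994exponential}, reproduced in the paper's appendix for $\eps>0$, dyadically decomposes the triangle $[0,1]^2_<$ into off-diagonal blocks $A^k_l$, uses the scaling identity $\beta_\eps(A^k_l)\stackrel{\text{law}}{=}2^{-(k+1)}\alpha_{\eps 2^{(k+1)/2}}([0,1]^2)$ to convert each block's self-intersection into a \emph{mutual} intersection (whose moments are finite with no renormalization at all), exploits independence of $\{\beta_\eps(A^k_l)\}_l$ within each scale, and closes with an iterated H\"older inequality over scales. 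The whole purpose of that construction is to bypass exactly the term-by-term cancellation you describe. Your proposal instead commits to the cancellation route, correctly identifies ``bookkeeping the cancellation'' as the principal obstacle, and then simply asserts that ``executing it carefully yields $\E_\B[|X_\eps|^n]\leq C^n n!$'' without supplying the argument: which divergent sub-diagrams pair with which powers of $\E_\B\beta_\eps$, why the residual is finite and uniform in $\eps$, and why the resulting constant is $C^n n!$ rather than something larger. That bookkeeping is the whole estimate, not a routine verification, so as written the $X_\eps$ half is a plan rather than a proof, and it is substantially harder to carry out than the dyadic-decomposition argument the paper actually uses.
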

The above result is standard. The case $\eps=0$, i.e., the exponential integrability of intersection local time, was addressed in the classical work \cite{le1994exponential}. We could not find a direct reference for $\eps>0$, though the proof follows essentially in the same line as the case of $\eps=0$. For the convenience of the reader, we present the details here.

\begin{proof}
We consider $Y_\eps$ first. Since $R=\rho\star \rho$, we can write 
\[
Y_\eps=\int_{[0,1]^2}\int_{\R^2} \rho_\eps(B_s^1-x)\rho_\eps(B_u^2-x)dx dsdu,
\]
with $\rho_\eps(x)=\eps^{-2}\rho(x/\eps)$. For any $n\in \mathbb{N}$,
\[
\begin{aligned}
\E_\B[Y_\eps^n]=&\int_{\R^{2n}}\left( \int_{[0,1]^{2n}} \E_\B\left[\prod_{k=1}^n \rho_\eps(B_{s_k}^1-x_k)\rho_\eps(B_{u_k}^2-x_k)\right]d\s d\bu \right)d\x\\
=&\int_{\R^{2n}} \left(\int_{[0,1]^n} \E_\B\left[\prod_{k=1}^n \rho_\eps(B_{s_k}-x_k)\right]d\s\right)^2 d\x.
\end{aligned}
\]
%We write 
%\[
%\int_{[0,t]^n} \E[\prod_{j=1}^n \rho_\eps(B_{s_j}-x_j)]ds=\sum_{\sigma} \int_{\R^{2n}} \int_{[0,t]^n_<} \prod_{j=1}^n \rho_\eps(z_j-x_{\sigma(j)})p_{s_j-s_{j-1}}(z_j-z_{j-1}) ds dz
%\]
%where $\sum_{\sigma}$ denotes the summation over all permutations over $\{1,\ldots,n\}$.
By \cite[(2.2.11)]{chen2010random}, we have 
\[
\E_\B[Y_\eps^n]= \int_{\R^{2n}}  \left( \int_{\R^{2n}}\prod_{k=1}^n \rho_\eps(z_k-x_k)\sum_{\sigma} \int_{[0,1]^n_<}  \prod_{k=1}^n p_{s_k-s_{k-1}}(z_{\sigma(k)}-z_{\sigma(k-1)})d\s d\z\right)^2d\x,
\]
where $p_t(x)$ is the density of $N(0,t)$, $[0,t]^n_<=\{0\leq s_1<\ldots<s_n\leq t\}$, and $\sum_\sigma$ denotes the summation over all permutations over $\{1,\ldots,n\}$. If we denote 
\[
h(z_1,\ldots,z_n)=\sum_{\sigma} \int_{[0,1]^n_<}  \prod_{k=1}^n p_{s_k-s_{k-1}}(z_{\sigma(k)}-z_{\sigma(k-1)})d\s, \  \ Q_\eps(z_1,\ldots,z_n)=\prod_{k=1}^n \rho_\eps(z_k),
\] then $\E_\B[Y_\eps^n]$ equals to 
\begin{equation}\label{e.lemm}
\begin{aligned}
\int_{\R^{2n}}  |Q_\eps\star h(x_1,\ldots,x_n)|^2 d\x \leq& \left(\int_{\R^{2n}} Q_\eps(x_1,\ldots,x_n)d\x\right)^2 \int_{\R^{2n}}| h(x_1,\ldots,x_n)|^2 d\x\\
=& \int_{\R^{2n}}| h(x_1,\ldots,x_n)|^2 d\x= \E_\B[ \alpha([0,1]^2)^n],%=\hat{R}(0)^n t^n \E[|\alpha([0,1]^2)|^n]
\end{aligned}
\end{equation}
where $\alpha([0,1]^2)$ is the mutual-intersection local time formally written as 
\[
\alpha([0,1]^2)=\int_0^1\int_0^1 \delta(B^1_s-B^2_u)dsdu,
\] 
and we used the Le Gall's moment formula in the second line of \eqref{e.lemm}. By \cite{le1994exponential}, we have
\[
\E_\B[\exp(\mu\alpha([0,1]^2))]<C
\]
 for some $\mu>0$, hence we only need to choose $\lambda=\mu$
%i.e., $t<\lambda$ implies
to get \[
 \E_\B[e^{\lambda Y_\eps}]=\sum_{n=0}^\infty \frac{\lambda^n\E_\B[Y_\eps^n]}{n!}\leq \sum_{n=0}^\infty \frac{\lambda^n }{n!} \E_\B[|\alpha([0,1]^2)|^n]= \E_\B[e^{\mu\alpha([0,1]^2)}]<\infty.
\]

Next, we consider $X_\eps$. We define the triangle approximation of $\{(u,s):0\leq u<s\leq 1\}$:
\[
A_l^k= \Big[\frac{2l}{2^{k+1}},\frac{2l+1}{2^{k+1}}\Big) \times \Big[ \frac{2l+1}{2^{k+1}},\frac{2l+2}{2^{k+1}} \Big), \  \ l=0,1,\ldots,2^{k-1}, k=0,1,\ldots.
\] 
%and define 
%\[
%\alpha_\eps(A)=\int_{A} R_\eps(B_s-W_u)dsdu, \  \ \beta_\eps(A)=\int_{A} R_\eps(B_s-B_u)dsdu
%\] 
%for any set $A\subset \R_+^2$. 
We will use the following three properties:

(i) Fix any $k$, $\{\beta_\eps(A_l^k)\}_{l=0,\ldots,2^k-1}$ are i.i.d. random variables.

(ii) $\beta_\eps(A_l^k)\stackrel{\text{law}}{=}2^{-(k+1)}\beta_{\eps 2^{(k+1)/2}}([0,1]\times [1,2])\stackrel{\text{law}}{=}2^{-(k+1)}\alpha_{\eps 2^{(k+1)/2}}([0,1]^2)$

(iii)
%1\ges\sup_{\eps>0} \E[e^{\alpha_\eps([0,t]^2)}]=\sup_{\eps>0}\E[e^{t \alpha_{\eps/\sqrt{t}}([0,1]^2)}]=
%\sup_{t<\lambda}
$\sup_{\eps>0}\E_\B[e^{\lambda \alpha_{\eps}([0,1]^2)}] \leq C$ for some $\lambda,C>0$.

By (iii) and a Taylor expansion, there exists $C>0$ such that for sufficiently small $\lambda$
\begin{equation}\label{e.emm2}
\sup_{\eps>0}\E_\B[e^{\lambda(\alpha_\eps([0,1]^2)-\E_\B[\alpha_\eps([0,1]^2)])}]\leq e^{ C\lambda^2}.
\end{equation}
We fix the constants $\lambda,C$ from now on, and write 
\[
X_\eps=\sum_{k=0}^\infty \sum_{l=0}^{2^k-1} (\beta_\eps(A_l^k)-\E_\B[\beta_\eps(A_l^k)]).
\]
Fix $a\in (0,1)$ and define a sequence of constants 
\[
b_1=2\lambda, \ \ b_N=2\lambda\prod_{j=2}^N (1-2^{-a(j-1)}), N=2,3,\ldots,
\] 
we have 
\[
\begin{aligned}
&\E_\B\exp\left[b_N\sum_{k=0}^N \sum_{l=0}^{2^k-1} (\beta_\eps(A_l^k)-\E_\B\beta_\eps(A_l^k))\right]\\
\leq &\left(\E_\B \exp\left[b_{N-1}\sum_{k=0}^{N-1} \sum_{l=0}^{2^k-1} (\beta_\eps(A_l^k)-\E_\B\beta_\eps(A_l^k))\right]\right)^{1-2^{-a(N-1)}}\\\
&\times \left(\E_\B\exp\left[2^{a(N-1)}b_N\sum_{l=0}^{2^N-1}(\beta_\eps(A_l^N)-\E_\B\beta_\eps(A_l^N))\right]\right)^{2^{-a(N-1)}}\\
\leq &\E_\B\exp\left[b_{N-1}\sum_{k=0}^{N-1} \sum_{l=0}^{2^k-1} (\beta_\eps(A_l^k)-\E_\B\beta_\eps(A_l^k))\right]\\
&\times\left(\E_\B\exp\left[2^{a(N-1)}b_N(\beta_\eps(A_0^N)-\E_\B\beta_\eps(A_0^N))\right]\right)^{2^{N-a(N-1)}}.
\end{aligned}
\]
Since $\beta_\eps(A_0^N)\stackrel{\text{law}}{=}2^{-(N+1)}\alpha_{\eps 2^{(N+1)/2}} ([0,1]^2)$, we have 
\[
\begin{aligned}
&\E_\B\exp\left[2^{a(N-1)}b_N(\beta_\eps(A_0^N)-\E_\B\beta_\eps(A_0^N))\right]\\
=&\E_\B\exp\left[2^{a(N-1)}b_N2^{-(N+1)}(\alpha_{\eps 2^{(N+1)/2}} ([0,1]^2)-\E_\B\alpha_{\eps 2^{(N+1)/2}} ([0,1]^2))\right].
\end{aligned}
\]
Using the fact that $2^{a(N-1)}b_N2^{-(N+1)}<\lambda$ and \eqref{e.emm2}, we derive for all $\eps>0$ that 
\[
\E_\B\exp\left[2^{a(N-1)}b_N(\beta_\eps(A_0^N)-\E_\B\beta_\eps(A_0^N))\right]\leq e^{Cb_N^2 2^{-2N+2a(N-1)}},
\]
so there exists $C'>0$ such that 
\[
\begin{aligned}
&\E_\B\exp\left[b_N\sum_{k=0}^N \sum_{l=0}^{2^k-1} (\beta_\eps(A_l^k)-\E_\B\beta_\eps(A_l^k))\right]\\
\leq &\E_\B\exp\left[b_{N-1}\sum_{k=0}^{N-1} \sum_{l=0}^{2^k-1} (\beta_\eps(A_l^k)-\E_\B\beta_\eps(A_l^k))\right]e^{C'2^{(a-1)N}}.
\end{aligned}
\]
Iterating the above inequality, we get 
\[
\E_\B\exp\left[b_N\sum_{k=0}^N \sum_{l=0}^{2^k-1} (\beta_\eps(A_l^k)-\E_\B\beta_\eps(A_l^k))\right] \leq \exp(C'(1-2^{a-1})^{-1})
\]
Since $b_N\to b_\infty$ for some $b_\infty>0$, we have 
\[
\E_\B[\exp(b_\infty X_\eps)] \leq \exp(C'(1-2^{a-1})^{-1}),
\]
which completes the proof.
\end{proof}

\subsection*{Acknowledgments} We thank the anonymous referees for a very careful reading of our paper and helpful suggestions and comments. %especially for showing us the argument in \eqref{e.konig}. 
We thank Dirk Erhard and Nikolaos Zygouras for stimulating discussions and for showing us the argument in Remark~\ref{r.de}. YG is partially supported by the NSF through DMS-1613301. WX is
supported by EPSRC through the research fellowship EP/N021568/1.

%\bibliographystyle{siam}
%\bibliography{Refs}

\end{document}